\newtheorem{thm}{Theorem}[section]
\newtheorem{lem}[thm]{Lemma}
\newtheorem{cor}[thm]{Corollary}
\theoremstyle{definition}
\newtheorem{defn}[thm]{Definition}
\newtheorem{q}[thm]{Question}
\renewcommand{\leq}{\leqslant}
\renewcommand{\geq}{\geqslant}
\DeclareMathOperator{\sg}{\textit{sg}}
\DeclareMathOperator{\vsg}{\textit{vsg}}
\DeclareMathOperator{\fs}{\textit{fs}}
\title{Thin Set Versions of Hindman's Theorem}
\renewcommand{\footnotemark}{}
\author{Denis R. Hirschfeldt and Sarah C. Reitzes\\Department of
Mathematics, University of Chicago\thanks{The authors were partially
supported by Focused Research Group grant DMS-1854279 from the
National Science Foundation of the United States. Hirschfeldt was also
partially support by NSF grant DMS-1600543, and Reitzes by NSF grant
DGE-1746045.}}
\begin{document}

\maketitle

\section{Introduction}

This paper is part of a line of research on the
computability-theoretic and reverse-mathematical strength of versions
of Hindman's Theorem~\cite{Hindman} that began with the work of Blass,
Hirst, and Simpson~\cite{BHS}, and has seen considerable interest
recently. We assume basic familiarity with computability theory and
reverse mathematics, at the level of the background material in
\cite{Hbook}, for instance. On the reverse mathematics side, the two
major systems with which we will be concerned are RCA$_0$, the usual
weak base system for reverse mathematics, which corresponds roughly to
computable mathematics; and ACA$_0$, which corresponds roughly to
arithmetic mathematics. For principles $P$ of the form $(\forall
X)\, [\Phi(X) \rightarrow (\exists Y)\, \Psi(X,Y)]$, we call any $X$
such that $\Phi(X)$ holds an \emph{instance} of $P$, and any $Y$ such
that $\Psi(X,Y)$ holds a \emph{solution} to $X$.

We begin by introducing some related combinatorial principles.  For a
set $S$, let $[S]^n$ be the set of $n$-element subsets of
$S$. \emph{Ramsey's Theorem} (RT) is the statement that for every $n$
and every coloring of $[\mathbb N]^n$ with finitely many colors, there
is an infinite set $H$ that is \emph{homogeneous} for $c$, which means
that all elements of $[H]^n$ have the same color. There has been a
great deal of work on computability-theoretic and reverse-mathematical
aspects of versions of Ramsey's Theorem, such as RT$^n_k$, which is RT
restricted to colorings of $[\mathbb N]^n$ with $k$ many colors. (See
e.g.\ \cite{Hbook}.)

The Thin Set Theorem is another variant of Ramsey's Theorem that has
been studied from this perspective. It follows easily from Ramsey's
Theorem itself.

\begin{defn}
\emph{Thin Set Theorem} (TS): For every $n$ and every coloring $c :
[\mathbb  N]^n \rightarrow \mathbb N$, there is an infinite set $T
\subseteq \mathbb N$ and an $i$ such that $c(s) \neq i$ for all $s \in
[T]^n$. We call such a set $T$ a \emph{thin set} for $c$. TS$^n$ is
the restriction of TS to colorings of $[\mathbb N]^n$.
\end{defn}

Jockusch~\cite{J} showed that there is a computable instance of
RT$^3_2$ such that any solution computes the halting problem
$\emptyset'$. As shown by Simpson~\cite{Sbook}, Jockusch's
construction can also be used to prove that RT$^3_2$ (and hence RT)
implies ACA$_0$ over RCA$_0$. Wang~\cite{W} showed that TS, on the
other hand, does not have this much power. Indeed, it has a property
known as strong cone avoidance, which implies in particular that for
every coloring $c : [\mathbb N]^n \rightarrow \mathbb N$ and every
noncomputable $X$, there is an infinite thin set for $c$ that does not
compute $X$. It also follows from strong cone avoidance that TS does
not imply ACA$_0$ over RCA$_0$.

As shown by Seetapun~\cite{SS}, RT$^2_k$ also fails to imply
ACA$_0$. Indeed, Liu~\cite{L1,L2} showed that it does not imply the
weaker system WKL$_0$, which consists of RCA$_0$ together with Weak
K\"onig's Lemma, or the even weaker system WWKL$_0$ consisting of
RCA$_0$ together with Weak Weak K\"onig's Lemma. Patey~\cite{P2}
showed that the same is true of TS.

We now turn to Hindman's Theorem. For a set $S \subseteq \mathbb N$,
let $\fs(S)$ be the set of sums of nonempty finite sets of distinct
elements of $S$.

\begin{defn}
\emph{Hindman's Theorem} (HT): For every coloring of $\mathbb N$ with
finitely many colors, there is an infinite set $S \subseteq \mathbb N$
such that all elements of $\fs(S)$ have the same color.
\end{defn}

Blass, Hirst, and Simpson~\cite{BHS} showed that such an $S$ can
always be computed in the $(\omega+1)$st jump of the coloring, and
that there is a computable coloring such that every such $S$
computes $\emptyset'$. By analyzing these proofs they showed that HT
is provable in ACA$_0^+$ (the system consisting of RCA$_0$ together
with the statement that $\omega$th jumps exist) and implies ACA$_0$
over RCA$_0$. The exact computability-theoretic and
reverse-mathematical strength of HT remains open.

There has recently been interest in studying restricted versions of HT
such as the following. (See e.g.\ \cite{Car}.)

\begin{defn}
HT$^{\leq n}$ is HT restricted to sums of at most $n$ many elements,
and HT$^{=n}$ is HT restricted to sums of exactly $n$ many
elements. HT$^{\leq n}_k$ and HT$^{=n}_k$ are the corresponding
restrictions to colorings with $k$ many colors.
\end{defn}

Dzhafarov, Jockusch, Solomon, and Westrick~\cite{DJSW} showed that
HT$^{\leq 3}_3$ implies ACA$_0$ over RCA$_0$. Carlucci,
Ko\l{}odzieczyk, Lepore, and Zdanowski~\cite{CKLZ} did the same for
HT$^{\leq 2}_4$. These principles are also complex in a more heuristic
sense: There is no known way to prove even HT$^{\leq 2}_2$ other than
to give a proof of the full HT, which has led Hindman, Leader, and
Strauss~\cite{HLS} to ask whether every proof of HT$^{\leq 2}$ is also
a proof of HT. This question can be formalized by asking whether
HT$^{\leq 2}$ (or HT$^{\leq 2}_2$) implies HT, say over RCA$_0$. A
related open question is whether HT$^{\leq 2}_2$ is provable in
ACA$_0$.

The principle HT$^{=2}$ is quite different, as HT$^{=2}_k$ follows
easily from RT$^2_k$. Indeed, it was not clear even whether this
principle is computably true until the work of Csima, Dzhafarov,
Hirschfeldt, Jockusch, Solomon, and Westrick~\cite{CDHJSW}, who showed
that it is not, and that indeed there is a computable instance of
HT$^{=2}_2$ with no $\Sigma^0_2$ solutions. (The same had been shown
for RT$^2_2$ by Jockusch~\cite{J}, who also showed that every
computable instance of RT$^2_2$ has a $\Pi^0_2$ solution, which
implies that the same is true of HT$^{=2}_2$.) They also showed that
there is  a computable instance of HT$^{=2}_2$ such that every
solution has DNC degree relative to $\emptyset'$, and adapted this
proof to show that HT$^{=2}_2$ implies the principle RRT$^2_2$, a
version of the Rainbow Ramsey Theorem, over RCA$_0$. (See Section
\ref{hard} for definitions.)

In this paper, we study further versions of Hindman's Theorem,
obtained by combining HT and its variants with the Thin Set Theorem.

\begin{defn}
thin-HT: For every coloring $c : \mathbb N \rightarrow \mathbb N$,
there is an infinite set $S \subseteq \mathbb N$ such that $\fs(S)$ is
thin for $c$. We definite restrictions such as thin-HT$^{\leq n}$
analogously.
\end{defn}

In Section~\ref{encoding}, we give similar lower bounds on the
complexity of thin-HT as Blass, Hirst, and Simpson~\cite{BHS} gave for
HT, which suggests that thin-HT behaves like HT at least to some
extent. Indeed, it seems possible that thin-HT is equivalent to HT
over RCA$_0$. The situation for restricted versions is different,
however. Clearly, thin-HT$^{=n}$ follows from TS$^n$, but in fact so
does thin-HT$^{\leq n}$, due to the following fact.

\begin{lem}
For each $n$ and $k$, the following holds in $\textup{RCA}_0 +
\textup{TS}^n$: Given $c_i : [\mathbb N]^{m_i} \rightarrow \mathbb N$
for $i \leq k$, with $m_i \leq n$ for all $i \leq k$, there is a
single infinite set $T$ and a $j$ such that $c_i(s) \neq j$ for each
$c_i$ and each $s \in [T]^{m_i}$ with $i \leq k$.
\end{lem}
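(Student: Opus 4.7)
The plan is to first reduce to the case where every $m_i$ equals $n$ via a padding argument, and then apply TS$^n$ to a single combined coloring of $[\mathbb N]^n$ with bounded range, iterating as necessary to extract a common missed value.

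For the padding, given $c_i : [\mathbb N]^{m_i} \to \mathbb N$ with $m_i \leq n$, define $\tilde c_i : [\mathbb N]^n \to \mathbb N$ by $\tilde c_i(\{a_1 < \cdots < a_n\}) = c_i(\{a_1, \ldots, a_{m_i}\})$. An infinite set $T$ that is thin for $\tilde c_i$ avoiding some value $j$ is also thin for $c_i$ avoiding $j$: given any $s' \in [T]^{m_i}$, appending $n - m_i$ elements of $T$ above $\max s'$ yields an $s \in [T]^n$ with $c_i(s') = \tilde c_i(s) \neq j$. Hence we may assume $m_0 = m_1 = \cdots = m_k = n$.

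With this reduction in place, for each $s \in [\mathbb N]^n$ set $R(s) = \{c_0(s), c_1(s), \ldots, c_k(s)\}$, a finite subset of $\mathbb N$ of size at most $k + 1$. Define $\bar c : [\mathbb N]^n \to \mathbb N$ by $\bar c(s) = \min(\mathbb N \setminus R(s))$. Since $|R(s)| \leq k + 1$, we have $\bar c(s) \in \{0, 1, \ldots, k + 1\}$, and crucially, $\bar c(s) = j$ entails $j \notin R(s)$, and hence $c_i(s) \neq j$ for every $i \leq k$. Therefore, finding an infinite $T$ on which $\bar c$ is constant yields the desired common missed value. Applying TS$^n$ to $\bar c$ iteratively (at most $k + 2$ useful iterations) should shrink the image of $\bar c$ on the successive thin sets down to a single value on some infinite $T^*$.

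The main obstacle is arranging the iteration so that each application of TS$^n$ actually narrows the image of $\bar c$: the principle may return a missed value $j^* > k + 1$, which is trivially avoided by $\bar c$ and provides no new information about $\bar c$ on the resulting thin set. One standard workaround is to modify $\bar c$ before each application by interleaving its bounded values with an injective ``spreading'' coloring defined on a sparse subfamily of $[\mathbb N]^n$, so that TS$^n$ is compelled to return a missed value in the intended range $\{0, 1, \ldots, k + 1\}$. Carrying this out carefully inside RCA$_0 + {}$TS$^n$, while ensuring that the iteration terminates in finitely many steps using only $\Sigma^0_1$-induction, is the technical crux of the proof.
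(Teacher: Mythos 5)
Your padding reduction to the case $m_0 = \cdots = m_k = n$ is correct, and it is essentially the same observation the paper uses when it says TS$^n$ implies TS$^m$ for $m < n$. The problem is the second half of your plan. You want an infinite $T^*$ on which $\bar c(s) = \min(\mathbb N \setminus R(s))$ is \emph{constant}, but that is a homogeneity demand on a $(k{+}2)$-coloring of $[\mathbb N]^n$, i.e.\ it is the content of RT$^n_{k+2}$, which TS$^n$ has no business delivering. Iterating TS$^n$ does not repair this: as you yourself observe, TS$^n$ only promises \emph{some} avoided color, and since $\bar c$ has bounded range, an application of TS$^n$ may simply return $T = \mathbb N$ together with some $j^* > k{+}1$, yielding no progress at all. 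The ``spreading'' device you gesture at does not resolve this. If you interleave $\bar c$ with an injective coloring on a sparse subfamily, the returned missed color may lie entirely in the injective part and tell you nothing about $\bar c$; and no reindexing of the finite range $\{0,\dots,k{+}1\}$ into $\mathbb N$ can help, because for any finite-to-one relabeling a single avoided value still knocks out at most one label. You have correctly located the obstruction, but you have not overcome it, and I do not see how to make the $\bar c$-constant strategy work with TS$^n$ alone.

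The paper's proof sidesteps this by never trying to make a bounded-range coloring constant. Instead it maintains, by external induction on $j$, an infinite $T$ together with an \emph{infinite} set $C$ of colors such that $c_i(s) \notin C$ for all $i \leq j$ and $s \in [T]^{m_i}$. To incorporate $c_{j+1}$, it partitions $C$ into infinitely many infinite pieces $A_0, A_1, \dots$ and colors $s \in [T]^{m_{j+1}}$ by the index of the piece containing $c_{j+1}(s)$ (with a default of $0$ when $c_{j+1}(s) \notin C$). A TS$^{m_{j+1}}$ solution then gives an infinite $U \subseteq T$ and an index $i^*$ such that $c_{j+1}(s) \notin A_{i^*}$ on $[U]^{m_{j+1}}$; setting $C := A_{i^*}$ (still infinite, still contained in the old $C$) completes the step. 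The crucial point you are missing is exactly this: by arranging that each possible returned color of the auxiliary coloring corresponds to an \emph{infinite} collection of candidate target colors, every output of TS is automatically useful, so there is nothing to be ``compelled.'' Since $k$ is a fixed standard number, the $k{+}1$-fold iteration is an external induction and needs no induction beyond what RCA$_0$ provides. If you want to salvage your write-up, replace the $\bar c$-constant goal and the spreading heuristic by this maintained-infinite-set-of-missed-colors argument.
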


\begin{proof}
We use the fact that TS$^n$ implies TS$^m$ for each $m<n$, and proceed
by external induction to prove the stronger assertion that for each $j
\leq k$, $\textup{RCA}_0 + \textup{TS}^n$ proves that there is an
infinite set $T$ and an infinite set $C$ such that $c_i(s) \notin C$
for each $c_i$ and each $s \in [T]^{m_i}$ with $i \leq j$.

We do the base and inductive cases simultaneously. For $j+1>0$, assume
that that the assertion holds for $j$ and let $T$ and $C$ be as
above. For $j+1=0$, let $T=C=\mathbb N$. Define $d : [T]^{m_{j+1}}
\rightarrow \mathbb N$ as follows. Partition $C$ into infinitely many
infinite sets $A_0,A_1,\dots$. Let $d(s) = 0$ if either $c_{j+1}(s)
\in A_0$ or $c_{j+1}(s) \notin C$, and for $i>0$, let $d(s)=i$ if
$c_{j+1}(s) \in A_i$. By TS$^{m_{j+1}}$, there is an infinite $U
\subseteq T$ that is thin for $d$. Let $i \notin d([U]^{m_{j+1}})$ and
let $D=A_i$. Then $U$ and $D$ are infinite sets such that $c_i(s)
\notin D$ for each $c_i$ and each $s \in [U]^{m_i}$ with $i \leq j+1$.
\end{proof}

This lemma allows us to get thin-HT$^{\leq n}$ from TS$^n$ by taking a
coloring $c: \mathbb N \rightarrow \mathbb N$ and considering the
colorings that map $\{a_0,\ldots,a_j\}$ to $c(a_0+\cdots+a_j)$ for
each $j<n$.

There are also differences that have nothing to do with computability
theory and reverse mathematics between thin-HT$^{\leqslant n}$ on the
one hand, and thin-HT and HT$^{\leqslant n}$ on the other. The former
remains true if we allow sums of non-distinct elements, but it is not
difficult to show that the latter two do not. Similarly, the former
remains true for colorings $S \rightarrow \mathbb N$, where $S
\subseteq \mathbb N$ is any infinite set, while the latter two again
do not.

Nevertheless, even thin-HT$^{=2}$ still has a significant level of
complexity. In Section~\ref{hard}, we show that all of the lower
bounds mentioned above obtained in \cite{CDHJSW} for HT$^{=2}$ still
hold for thin-HT$^{=2}$.

In Section~\ref{open} we mention some open questions arising from our
results, and briefly discuss version of HT obtained by combining it
with thin set theorems for colorings with finitely many colors.

\section{Encoding $\emptyset'$ into thin-HT}
\label{encoding}

In this section, we show how to build on the proof of Theorem 2.2 of
Blass, Hirst, and Simpson~\cite{BHS}, which shows that there is a
computable instance of HT such that every solution computes
$\emptyset'$, to show that the same is true of thin-HT. We then derive
a reverse-mathematical consequence of our proof.

\begin{thm}
There is a computable instance of \textup{thin-HT} such that every
solution computes $\emptyset'$.
\end{thm}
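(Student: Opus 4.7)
The plan is to adapt the Blass--Hirst--Simpson (BHS) construction from Theorem~2.2 of \cite{BHS}. Recall that BHS build a computable finitely-valued coloring $c_0$ of $\mathbb N$ such that every infinite FS-set monochromatic for $c_0$ computes $\emptyset'$; their coloring encodes a computable approximation $\emptyset'_s$ into features of the base-$b$ representation of $n$, and the $\emptyset'$-computation exploits the fact that along a monochromatic FS-set $S=\{s_0<s_1<\cdots\}$ the cumulative sums $s_0,\,s_0+s_1,\,s_0+s_1+s_2,\dots$ share a common color, which forces stabilization of the approximation in a recognizable way.

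The difficulty in moving to thin-HT is that $\fs(S)$ need only avoid a single color, not be constantly colored. My plan is to build a computable $c:\mathbb N\to\mathbb N$ by enriching $c_0$ so that avoidance of any single $c$-color still forces enough BHS-style structure on $\fs(S)$ to recover $\emptyset'$. Concretely, I will package $c_0(n)$ with an auxiliary coordinate---a natural choice is the leading-digit position $p(n)$ of $n$ in base $b$---giving a coloring $c(n)=\langle c_0(n),p(n)\rangle$. Since the positions $p(n)$ along an FS-set range over all sufficiently large values (already through the cumulative sums), thinness for $c$ forbids at most a single $c_0$-value at a single position, and hence restricts $\fs(S)$ only on one ``row''; on the cofinite complement one can then invoke a monochromatic-style analysis of the cumulative sums.

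The main obstacle will be verifying that BHS's extraction of $\emptyset'$ continues to function once the colors along the cumulative sums are no longer constant but merely forbidden from a single pair $(i^*,p^*)$. I expect this to succeed because the BHS decoding reads $\emptyset'$ off a stable pattern of $\emptyset'_s$-information visible in the coloring, and removing one row of colors only excises finitely many of the infinitely many witnesses used by the decoding. Once this is carried out, the theorem follows; formalizing the argument in second-order arithmetic in the style of \cite{BHS} should also yield the promised reverse-mathematical corollary that thin-HT implies \textup{ACA}$_0$ over \textup{RCA}$_0$.
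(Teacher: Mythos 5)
Your plan does not work, and the gap is fundamental rather than a detail to be filled in. If $p(n)$ is the leading-digit position, then $\{n : p(n)=p^*\}$ is a \emph{finite} set, so forbidding the single color $(i^*,p^*)$ from $\fs(S)$ constrains $\fs(S)$ only on a finite initial segment of $\mathbb N$ and says nothing at all about $c_0$ on the rest of $\fs(S)$. An infinite $\fs(S)$ avoiding one such color can therefore be essentially arbitrary, and in particular computable, so nothing forces a solution to compute $\emptyset'$. More importantly, even if you replaced $p(n)$ by something with infinite fibers, the BHS decoding does not run on ``one color avoided at one position''; it extracts $\emptyset'$ from the fact that the colors of all members of $\fs(S)$ \emph{agree}, which lets one read stabilization of $\emptyset'_s$ off the constancy of a count (the number of ``short gaps'') modulo a fixed modulus. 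Merely removing a single color from a product coloring leaves the $c_0$-values on $\fs(S)$ completely unconstrained on the complementary rows, so there is no ``monochromatic-style analysis'' left to invoke.

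The paper's construction meets this difficulty with a genuinely different device. It colors $x$ by the pair $(p,i)$ where $p$ is the least prime not dividing $\vsg(x)$ (the computable very-short-gap count) and $i=\vsg(x) \bmod p$. Avoidance of one color does not directly give monochromaticity, but the structure of this coloring, plus the additivity of $\sg$ under sums of $2$-apart elements, lets one show: there is a prime $p$ and residue $i$ such that color $(p,i+1)$ is ``almost absent'' from $\fs(Y)$ while $(p,i)$ is not; and from this it follows that for every nonzero residue $j \bmod p$, the set $\{x \in \fs(Y): \sg(x)\equiv j \pmod p\}$ is bounded in a suitable sense. After deleting finitely many elements one gets $p \mid \sg(x)$ for all $x \in \fs(Y)$, and then the BHS argument for computing $\emptyset'$ goes through verbatim using divisibility by $p$ in place of constancy of the color. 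The point is that the useful structure on $\fs(Y)$ (a fixed prime dividing all the $\sg$-counts) has to be \emph{derived} from thinness via this prime-indexed coloring; it is not inherited automatically from the BHS coloring by tagging on an extra coordinate.
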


\begin{proof}
As in the proof of Theorem 2.2 of \cite{BHS}, we write each number
$x>0$ as $2^{n_0}+\cdots+2^{n_k}$ with $n_0<\cdots<n_k$, and define
$\lambda(x)=n_0$ and $\mu(x)=n_k$. A set $S$ \emph{has $2$-apartness}
if for every $x,y \in S$ with $x<y$, we have
$\mu(x)<\lambda(y)$. Lemma 4.1 of \cite{BHS} shows that from any
infinite $S$ we can compute an infinite set $T$ with $2$-apartness
such that $\fs(T) \subseteq \fs(S)$ (and hence if $\fs(S)$ is thin for
a coloring, so is $\fs(T)$).

Let $x=2^{n_0}+\cdots+2^{n_k}$ with $n_0<\cdots<n_k$. Say that
$(n_i,n_{i+1})$ is a \emph{short gap} in $x$ if there is an $m<n_i$
such that $m \notin \emptyset'[n_{i+1}]$ but $m \in \emptyset'$. Say
that $(n_i,n_{i+1})$ is a \emph{very short gap} in $x$ if there is an
$m<n_i$ such that $m \notin \emptyset'[n_{i+1}]$ but $m \in
\emptyset'[n_k]$. Let $\sg(x)$ and $\vsg(x)$ be the numbers of short
gaps and very short gaps in $x$, respectively. Note that $\sg$ is not
a computable function, but $\vsg$ is.

Fix a bijection between $\mathbb N$ and the set of pairs $(p,i)$ where
$p$ is prime and $1 \leq i < p$, and identify $\mathbb N$ with this
set via this bijection. Define the coloring $c$ by letting
$c(x)=(p,i)$ where $p$ is the least prime that does not divide
$\vsg(x)$ and $\vsg(x) = i \bmod p$. We say that $x$ \emph{has color
$(p,i)$} if $c(x)=(p,i)$, and we also say that $x$ \emph{has color}
$(p,0)$ or $(p,p)$ if it has color $(q,i)$ for some $q>p$, i.e., if
every prime less than or equal to $p$ divides $\vsg(x)$.

Let $Y$ be such that $\fs(Y)$ is an infinite thin set for $c$. We can
assume that $Y$ has $2$-apartness, by Lemma 4.1 of \cite{BHS}, as
mentioned above. This condition ensures that if $x,y \in \fs(Y)$ and
$\mu(x)<\lambda(y)$, and we express $x$ and $y$ as sums of sets $F$
and $G$ of distinct elements of $Y$, respectively, then $F$ and $G$
are disjoint, and hence $x+y \in \fs(Y)$.  Say that $S \subseteq
\fs(Y)$ is \emph{$\lambda$-bounded} if there is a bound on the values
of $\lambda(x)$ for $x \in S$ (which includes the case
$S=\emptyset$). Note that $\fs(Y)$ itself is not
$\lambda$-bounded. Note also that the union of finitely many
$\lambda$-bounded sets is $\lambda$-bounded. Say that a color $j$ is
\emph{almost absent} from $\fs(Y)$ if the set of $x \in \fs(Y)$ that
have color $j$ is $\lambda$-bounded. (This definition includes the
case $j=(p,0)$, or equivalently $j=(p,p)$.)

\begin{lem}
There are $p$ and $0 \leq i < p$ such that $(p,i+1)$ is almost absent
from $\fs(Y)$ but $(p,i)$ is not.
\end{lem}

\begin{proof}
Let $p$ be least such that there is a $j$ for which $(p,j)$ is almost
absent from $\fs(Y)$, which exists since $\fs(Y)$ is thin. If $p=2$
then $(p,j+1)$ cannot be almost absent, since every number has color
$(p,j)$ or $(p,j+1)$. Now suppose that $p>2$ and $q$ is the preceding
prime. Since $(q,0)$ is not almost absent from $\fs(Y)$ and every
number that has color $(q,0)$ has color $(p,j)$ for some $j$, there
is some $k$ such that $(p,k)$ is not almost absent. In either case,
since having color $(p,0)$ is the same as having color $(p,p)$, the
lemma follows.
\end{proof}

Fix $p$ and $i$ as in the above lemma.

\begin{lem}
Let $1 \leq j <p$. Then $S=\{x \in \fs(Y) : \sg(x) = j \bmod p\}$ is
$\lambda$-bounded.
\end{lem}

\begin{proof}
Suppose $S$ is not $\lambda$-bounded. Let $q_0<\cdots<q_{m-1}$ be the
primes less than $p$. Since there are only finitely many sequences
$(k_0,\ldots,k_{m-1})$ with $k_i < q_i$, there is such a sequence for
which $T=\{x \in S : (\forall \ell<m)\, \sg(x) = k_\ell \bmod
q_\ell\}$ is not $\lambda$-bounded.

Since $j \neq 0 \bmod p$, and hence $q_0 \cdots q_{m-1} j \neq 0 \bmod
p$, there is a multiple $n$ of $q_0 \cdots q_{m-1}$ such that $nj = 1
\bmod p$ (where $q_0 \cdots q_{m-1} = 1$ if $p=2$). Since $T$ is not
$\lambda$-bounded, there are $x_0<\cdots<x_{n-1} \in T$ such that each
$\lambda(x_{k+1})$ is sufficiently large relative to $\mu(x_k)$ to
ensure that $(\mu(x_k),\lambda(x_{k+1}))$ is not a short gap. Then the
short gaps in $x_0+\cdots+x_{n-1}$ are exactly the short gaps in
$x_0,\ldots,x_{n-1}$, so
$\sg(x_0+\cdots+x_{n-1})=\sg(x_0)+\cdots+\sg(x_{n-1})$. The latter is
equal to $nj \bmod p = 1 \bmod p$, since each $x_\ell$ is in $S$, and
is also equal to $nk_\ell \bmod q_\ell$ for each $\ell<m$, and hence
equal to $0 \bmod q_\ell$ for each $\ell<m$, since $n = 0 \bmod
q_\ell$.

Since $(p,i)$ is not almost absent from $\fs(Y)$, there is a $y \in
\fs(Y)$ that has color $(p,i)$ such that $\lambda(y) > \mu(x_{n-1})$,
and every number less than $\mu(x_{n-1})$ that is in $\emptyset'$ is
already in $\emptyset'[\lambda(y)]$. Note that $\vsg(y) = 0 \bmod
q_\ell$ for each $\ell<m$, as otherwise $c(y)$ would be of the form
$(q_\ell,k)$ for some $1 \leq k < q_\ell$. Now $\vsg(x_0 + \cdots +
x_{n-1} + y) = \vsg(y) + \sg(x_0 + \cdots + x_{n-1})$, which is equal
to $i+1 \bmod p$, and to $0 \bmod q_\ell$ for all $\ell<m$. So $x_0 +
\cdots + x_{n-1} + y$ has color $(p,i+1)$. As we can choose $x_0$ so
that $\lambda(x_0)$ is arbitrarily large, $(p,i+1)$ is not almost
absent from $\fs(Y)$, contradicting the choice of $i$.
\end{proof}

So by removing finitely many elements from $Y$ if needed, we can
assume that $p$ divides $\sg(x)$ for all $x \in \fs(Y)$. We can now
argue as in the proof of Claim 2 in the proof Theorem 2.2 of
\cite{BHS} to compute $\emptyset'$ from $Y$: Given $n$, find $x,y \in
Y$ such that $x<y$ and $n < \mu(x)$. The short gaps in $x+y$ are the
ones in $x$, the ones in $y$, and possibly $(\mu(x),\lambda(y))$. But
if the latter is a short gap, then $\sg(x+y)=\sg(x)+\sg(y)+1$, which
is impossible since $p$ divides all three numbers. Thus $n \in
\emptyset'$ if{}f $n \in \emptyset'[\lambda(y)]$.
\end{proof}

The above proof can be carried out in relativized form in RCA$_0$
except for two issues: One is that in RCA$_0$ we cannot show that the
union of finitely many $\lambda$-bounded sets is $\lambda$-bounded,
which in general requires the $\Pi^0_1$-bounding principle. Another is
that being almost absent is a $\Sigma^0_2$ condition, so we cannot
conclude in RCA$_0$ that there is a least $p$ such that there is a $j$
for which $(p,j)$ is almost absent from $\fs(Y)$. Since
$\Pi^0_1$-bounding follows from $\Sigma^0_2$-induction over RCA$_0$,
adding the latter to RCA$_0$ is sufficient to get around these issues,
so we have the following.

\begin{thm}
\textup{thin-HT} implies \textup{ACA}$_0$ over $\textup{RCA}_0 +
\textup{I}\Sigma^0_2$.
\end{thm}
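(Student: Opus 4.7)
The plan is to formalize the relativized version of the preceding theorem in RCA$_0 + \textup{I}\Sigma^0_2$. For an arbitrary set $X$ in a model of this theory, I would rerun the previous construction with $X'$ in place of $\emptyset'$ throughout, obtaining an $X$-computable coloring $c_X$ for which every infinite $Y$ with $\fs(Y)$ thin for $c_X$ computes $X'$ via an explicit index. Since ACA$_0$ is equivalent over RCA$_0$ to the statement that $X'$ exists for every set $X$, once the formalization is verified, applying thin-HT to $c_X$ in any model of RCA$_0 + \textup{I}\Sigma^0_2 + \textup{thin-HT}$ yields ACA$_0$.

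The two complexity issues flagged in the paragraph just above the statement are both handled by $\textup{I}\Sigma^0_2$. First, the claim that a finite union of $\lambda$-bounded subsets of $\fs(Y)$ is $\lambda$-bounded---used when partitioning $S = \{x \in \fs(Y) : \sg(x) \equiv j \pmod{p}\}$ into refinements indexed by residue tuples modulo primes below $p$ and selecting one that remains non-$\lambda$-bounded---is an instance of $B\Sigma^0_2$ (equivalently $B\Pi^0_1$), since $\lambda$-boundedness of a $\Delta^0_1(Y)$-set is $\Sigma^0_2(Y)$. Second, picking the least prime $p$ for which some $(p,j)$ is almost absent from $\fs(Y)$ is a $\Sigma^0_2(Y)$ least-number search, hence an instance of $L\Sigma^0_2$. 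Both principles follow from $\textup{I}\Sigma^0_2$ over RCA$_0$ by the standard equivalences $L\Sigma^0_2 \leftrightarrow \textup{I}\Sigma^0_2$ and the implication $\textup{I}\Sigma^0_2 \to B\Sigma^0_2$.

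Everything else in the argument---the pigeonhole over tuples of residues modulo primes below $p$, the choice of $n$ with $nj \equiv 1 \pmod{p}$, the additive identities for $\sg$ and $\vsg$ on sums with sufficiently large gaps, and the final recovery of $X'$ from $Y$ via the observation that $p \mid \sg(x)$ for all $x \in \fs(Y)$---is either finitary or $\Delta^0_1(Y, X')$ and goes through in RCA$_0$. The main bookkeeping task will be to confirm that each existence claim---finding $x_0 < \cdots < x_{n-1}$ in the non-$\lambda$-bounded set $T$ with sufficiently large gaps, and finding $y \in \fs(Y)$ of color $(p,i)$ with $\lambda(y) > \mu(x_{n-1})$ and $X'[\lambda(y)]$ correctly describing $X'$ below $\mu(x_{n-1})$---can be realized by bounded search within sets already at hand. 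In each case the witness exists by non-$\lambda$-boundedness of the ambient set (together with the fact that the $X'$-initial-segment condition on $y$ is bounded), so no additional comprehension beyond $\textup{I}\Sigma^0_2$ is required. The hard part, to the extent there is one, is just this complexity tracking, and it reduces entirely to the two issues above.
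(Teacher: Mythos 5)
Your proposal takes essentially the same route as the paper: relativize the encoding construction from the preceding theorem, and observe that the only two steps not available in RCA$_0$ alone are exactly the ones the paper flags---the use of $\Pi^0_1$-bounding to conclude that a finite union of $\lambda$-bounded subsets of $\fs(Y)$ is $\lambda$-bounded, and the $\Sigma^0_2$ least-number principle used to select the minimal prime $p$ with some $(p,j)$ almost absent---both of which follow from $\textup{I}\Sigma^0_2$. The additional bookkeeping you mention (bounded searches for the $x_i$ and for $y$) is implicit in the paper's proof and does not change the argument.
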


We do not know whether the use of I$\Sigma^0_2$ in this theorem can be
removed.

\section{Hard Instances of thin-HT$^{=2}$}
\label{hard}

In this section, we show that all the lower bounds on the complexity
of HT$^{=2}_2$ obtained by Csima, Dzhafarov, Hirschfeldt, Jockusch,
Solomon, and Westrick~\cite{CDHJSW} still hold for thin-HT$^{=2}$. (Of
course, all upper bounds on the complexity of HT$^{=2}_2$
automatically hold for thin-HT$^{=2}$, as the latter follows easily
from the former.) As in that paper, we use the computable version of
the Lov{\'a}sz Local Lemma due to Rumyantsev and Shen~\cite{R,RS}. In
particular, we use the following consequence of Corollary 7.2 in
\cite{RS} given in \cite{CDHJSW}, with an addendum on uniformity as
noted at the end of Section 4 of \cite{CDHJSW}. This uniformity, which
in \cite{CDHJSW} is used only to obtain results on Weihrauch
reducibility, will be essential in all our results, as their proofs
will require applying Theorem \ref{rsthm} infinitely often.

\begin{thm}[essentially Rumyantsev and Shen~\cite{RS}] 
\label{rsthm}
For each $q \in (0,1)$ there is an $M$ such that the following
holds. Let $F_0,F_1,\ldots$ be a computable sequence of finite sets,
each of size at least $M$. Suppose that for each $m \geq M$ and $n$,
there are at most $2^{qm}$ many $j$ such that $|F_j|=m$ and $n \in
F_j$, and that there is a computable procedure $P$ for determining the
set of all such $j$ given $m$ and $n$. Then there is a computable $c :
\mathbb N \rightarrow 2$ such that for each $j$ the set $F_j$ is not
homogeneous for $c$. Furthermore, $c$ can be obtained uniformly
computably from $F_0,F_1,\ldots$ and $P$ (for a fixed $q$).
\end{thm}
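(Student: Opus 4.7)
The plan is to set this up as an instance of a computable Lov\'asz Local Lemma and follow the Rumyantsev--Shen argument. For a uniformly random colouring $c \in 2^{\mathbb N}$, let $A_j$ be the bad event ``$F_j$ is homogeneous for $c$''. Then $\Pr[A_j] = 2^{1-|F_j|}$, and since $A_j$ depends only on the coordinates in $F_j$, it is independent of every $A_{j'}$ with $F_{j'} \cap F_j = \emptyset$. The goal is to construct, uniformly computably from the data, a $c$ for which every $A_j$ fails.

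First I would verify the LLL hypothesis in a form strong enough to allow computable slack. The sparsity assumption says that for each $m \geq M$ and each $n$, at most $2^{qm}$ of the sets $F_j$ of size $m$ contain $n$. Hence, for a fixed $F_j$, the number of $F_{j'}$ of size $m'$ that meet $F_j$ is at most $|F_j| \cdot 2^{qm'}$. Assigning weights $x_{j'} := 2^{-(1-q')|F_{j'}|}$ for some $q < q' < 1$ and summing the LLL ``neighbour weight'' of $A_j$ over $m' \geq M$ gives a geometric series dominated by a constant multiple of $|F_j| \cdot 2^{-(q'-q)M}$. For $M$ chosen sufficiently large in terms of $q$ alone, this undercuts the Shearer/LLL threshold, so a colouring avoiding every $A_j$ exists, and in fact a positive-measure set of such colourings exists with room to spare.

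To promote existence to a uniformly computable $c$, I would follow the computable Moser--Tardos framework of Rumyantsev and Shen: maintain a finite approximation to $c$, use $P$ at each stage to enumerate the finitely many $F_j$ that could currently become monochromatic under some extension, and resample the coordinates of any such event when it is ``activated''. The key analysis (carried out in \cite{RS}) shows that the geometric slack produced in the previous paragraph makes the expected number of resamplings at each coordinate finite, so the process stabilizes and the limit $c$ is computable. The main obstacle---and the reason the statement we need is the Rumyantsev--Shen theorem rather than just the classical LLL---is precisely this last step: getting a genuinely computable $c$ in the infinite setting, obtained uniformly from $(F_0, F_1, \ldots)$ and $P$. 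This uniformity is what the theorem encodes by letting $M$ depend only on $q$, so that a single resampling procedure works across all inputs satisfying the hypotheses for that $q$.
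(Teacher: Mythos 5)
The paper does not give a proof of this theorem at all: it is stated as a black box, attributed to Corollary 7.2 of Rumyantsev and Shen~\cite{RS} together with the uniformity observation recorded at the end of Section 4 of \cite{CDHJSW}. So there is no internal proof to compare your sketch against; what you have written is a summary of the external argument, and as such it is essentially correct in structure. Setting up the bad events $A_j$ with $\Pr[A_j]=2^{1-|F_j|}$, using the hypothesis to bound, for each $F_j$, the number of size-$m'$ neighbours by $|F_j|\cdot 2^{qm'}$, verifying the (asymmetric) Lov\'asz Local Lemma condition with geometric slack, and then invoking a computable Moser--Tardos-style resampling process to produce $c$ uniformly from the input data is exactly the intended route, and the observation that the whole point of the theorem is the promotion from bare existence to uniform computability is the right thing to emphasize.

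One small inconsistency to flag in the middle step. You define the weights as $x_{j'} := 2^{-(1-q')|F_{j'}|}$, but then claim the neighbour sum is dominated by $|F_j|\cdot 2^{-(q'-q)M}$. With the weight as written, the sum over size-$m'$ neighbours is about $|F_j|\sum_{m'\geq M} 2^{qm'}\cdot 2^{-(1-q')m'} = |F_j|\sum_{m'\geq M} 2^{(q+q'-1)m'}$, which requires $q+q'<1$ to converge and has tail of order $2^{(q+q'-1)M}$, not $2^{-(q'-q)M}$; moreover the constraint $q<q'<1-q$ is vacuous once $q\geq\tfrac12$ (the paper's own application uses $q=\tfrac12$). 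What you evidently intended is $x_{j'} := 2^{-q'|F_{j'}|}$ with $q<q'<1$: then the neighbour sum is $|F_j|\sum_{m'\geq M}2^{(q-q')m'}$, whose tail is indeed of order $|F_j|\cdot 2^{-(q'-q)M}$, and the LLL condition $\Pr[A_j]\leq x_j\prod_{j'\sim j}(1-x_{j'})$ is comfortably satisfied for $|F_j|\geq M$ once $M$ is chosen large depending only on $q$ (and $q'$). With that correction the sketch is sound.
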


We will also rely in this section on arguments in \cite{CDHJSW} when
they carry through in this case in an entirely analogous way.

We now introduce a notion of largeness that will be key to our
iterated applications of Theorem \ref{rsthm}. As in \cite{CDHJSW}, we
will be diagonalizing against $\Sigma^0_2$ sets, so this notion will
be defined in terms of sets that are c.e.\ relative to $\emptyset'$.
For a set $A$ and a number $s$, we write $s+A$ for the set $\{s+a : a
\in A\}$. We write $W_e$ for the $e$th enumeration operator. Given $e$
and $s$, for each $x \in W^{\emptyset'}_e[s]$, let $t_x$ be the least
$t$ such that $x \in W^{\emptyset'}_e[u]$ for all $u \in
[t,s]$. (I.e., $t_x$ measures how long $x$ has been in
$W^{\emptyset'}_e$.) Order the elements of $W^{\emptyset'}_e[s]$ by
letting $x \prec y$ if either $t_x<t_y$ or both $t_x=t_y$ and
$x<y$. Let $E_e^n[s]$ be the set consisting of the least $n$ many
elements of $W^{\emptyset'}_e[s]$ under this ordering, or
$E_e^n[s]=[0,n)$ if $W^{\emptyset'}_e[s]$ has fewer than $n$ many
elements. If there is an $s$ such that $E_e^n[t] = E_e^n[s]$ for all
$t>s$ then let $E_e^n = E_e^n[s]$.

\begin{defn}
For a binary function $f$, say that a set $D$ is \emph{$f$-large} if
for all $e$ and $k$ such that $E_e^{f(e,k)}$ is defined, we have $|D
\cap (s+E_e^{f(e,k)})| \geq k$ for all sufficiently large $s$.
\end{defn}

Note that $\mathbb N$ is $g$-large for the function $g(e,k)=k$, and
that $f$-largeness is preserved under finite difference. The following
lemma captures the key property of this notion of largeness.

\begin{lem}
\label{mainlem}
From a binary function $f$ and an $f$-large set $D$, we can uniformly
compute a binary function $\widehat{f}$ and a splitting $D = D^0
\sqcup D^1$ such that each $D^i$ is $\widehat{f}$-large.
\end{lem}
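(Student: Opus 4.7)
The plan is to use Theorem~\ref{rsthm} to produce a uniformly $(f,D)$-computable $2$-coloring $c\colon\mathbb{N}\to 2$, then take $D^i = D\cap c^{-1}(i)$ and $\widehat{f}(e,k) = f(e,h(k))$ for a suitable computable inflation $h$ (for concreteness, something like $h(k) = kM$ for a large constant $M = M(q)$). The $f$-largeness of $D$ will then yield $|D\cap(s+E_e^{\widehat{f}(e,k)})|\geq h(k)$ for all sufficiently large $s$ (whenever $E_e^{\widehat{f}(e,k)}$ is defined), so it suffices to arrange that $c$ is \emph{$k$-balanced} on each such intersection --- contains at least $k$ elements of each color, for all sufficiently large $s$. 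From such a $c$, both $D^0$ and $D^1$ are immediately $\widehat{f}$-large.

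To force $k$-balance, I would exploit the fact that although each $E_e^n$ is only $\Sigma^0_2$, its stage approximations $E_e^n[t]$ are uniformly computable (using the standard computable approximation of $\emptyset'$). For each tuple $(e,k,s,t)$ for which the candidate $C_{e,k,s,t} := (s+E_e^{\widehat{f}(e,k)}[t])\cap D$ has size at least $h(k)$, I would partition $C_{e,k,s,t}$ into $k$ disjoint blocks of size at least $h(k)/k$ and enumerate each block as a set $F_j$ in the Rumyantsev--Shen family. Non-monochromaticity on every block yields ``at least one element of each color per block,'' and hence $k$-balance on the whole candidate; once $s$ and $t$ are large enough for the approximation to have stabilized and for $f$-largeness to have kicked in, this is exactly $k$-balance on $D\cap(s+E_e^{\widehat{f}(e,k)})$.

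The main obstacle is verifying the density hypothesis of Theorem~\ref{rsthm}: for some $q\in(0,1)$ and every $m$ and $x$, at most $2^{qm}$ enumerated sets of size $m$ contain $x$. Each candidate containing $x$ contributes exactly one block containing $x$, so density at block-size $m$ is governed by the number of candidates containing $x$. I would keep this count under control by (i) recording each $(e,k,s)$ at most once, at the first $t$ at which $E_e^{\widehat{f}(e,k)}[t]$ has been stable over a sufficient threshold of stages; (ii) enumerating shifts $s$ only along a sparse computable subsequence starting above a rapidly growing threshold $\sigma(e,k)$, so that, since $x\in s+E$ forces $s\leq x$, only polylogarithmically many shifts $\leq x$ are enumerated for each $(e,k)$; and (iii) restricting to $(e,k)$ with $\sigma(e,k)\leq x$, which by the same reason cuts the number of contributing $(e,k)$ for a given $x$ to polylog$(x)$. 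Choosing the constant $M$ (equivalently the block size $h(k)/k$) large enough that the resulting count is dominated by $2^{qM}$ then yields the density bound, and a constant offset in $\widehat{f}$ (e.g.\ replacing $k$ by $\max(k,K_0)$) absorbs the finitely many small-$k$ cases. Theorem~\ref{rsthm} then delivers the required $c$, and the construction is visibly uniform in $(f,D)$, as required.
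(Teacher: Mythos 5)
Your high-level plan --- apply Theorem~\ref{rsthm} to a family of disjoint $k$-element blocks carved out of translated copies of $E$, use non-mono\-chro\-ma\-ti\-ci\-ty on every block to get $k$-balance, and take $D^i = D \cap c^{-1}(i)$ --- is the same as the paper's. But the density control is where the argument actually lives, and your mechanism for it does not work, for two independent reasons.

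First, item (ii), the sparsification of shifts, is incompatible with what you need to conclude. $\widehat{f}$-largeness of $D^i$ means $|D^i \cap (s + E_e^{\widehat{f}(e,k)})| \geq k$ for \emph{all} sufficiently large $s$, not merely for $s$ on a sparse subsequence. If you enumerate blocks only for a sparse set of shifts, non-monochromaticity gives $k$-balance only at those shifts, and $k$-balance at a nearby $s'$ tells you nothing about $D \cap (s + E)$ for a different shift $s$ (even $s = s'+1$), since these are disjoint-looking translates. So $D^i$ need not be $\widehat{f}$-large.

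Second, even setting that aside, the density hypothesis of Theorem~\ref{rsthm} must be \emph{uniform in} $n$: for each $m \geq M$ there are at most $2^{qm}$ size-$m$ members of the family containing $n$, with a bound independent of $n$. Your bookkeeping yields a count on the order of $\mathrm{polylog}(x)$ for a given point $x$, which is unbounded in $x$; for any fixed $m$ it eventually exceeds $2^{qm}$. The situation is made worse by your choice of constant block size $M = h(k)/k$: then for a fixed $m = M$, blocks coming from \emph{every} pair $(e,k)$ (of which infinitely many have $\sigma(e,k) \leq x$ as $x \to \infty$) compete, so there is no hope of a bound depending only on $m$. The paper sidesteps both problems differently: it chooses block size $g(e,k)$ to be an injective function with computable image, so that for each $m$ exactly one $(e,k)$ can contribute; and rather than thinning the shifts, it enumerates blocks for all ``acceptable'' shifts $s$ (a cofinite set, so the ``for all large $s$'' conclusion goes through), where acceptability additionally requires the current approximation $E^{f(e,kg(e,k))}_e[s]$ to agree with the approximation $E^{f(e,kg(e,k))}_e[t]$ of any earlier $t<s$ whose translate overlaps $s$'s. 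This overlap condition forces all acceptable shifts whose translates contain a given $x$ to use the same $E$-approximation, which caps the number of contributing shifts at $kg(e,k) \leq 2^{m/2}$, uniformly in $x$. Without some device of this kind, your argument does not establish the required density bound.
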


Before proving this lemma, let us derive some of its consequences,
beginning with computability-theoretic lower bounds on the complexity
of thin-HT$^{=2}$. A function $f$ is \emph{diagonally noncomputable}
(\emph{DNC}) relative to an oracle $X$ if $f(e) \neq \Phi^X_e(e)$ for
all $e$ such that $\Phi^X_e(e)$ is defined, where $\Phi_e$ is the
$e$th Turing functional. A degree is \emph{DNC} relative to $X$ if it
computes a function that is DNC relative to $X$. An infinite set $A$
is \emph{effectively immune} relative to $X$ if there is an
$X$-computable function $f$ such that if $W^X_e \subseteq A$ then
$|W^X_e| < f(e)$.

\begin{thm}[Jockusch~\cite{Jfpf}]
\label{Jthm}
A degree is DNC relative to $X$ if and only if it computes a set that
is effectively immune relative to $X$.
\end{thm}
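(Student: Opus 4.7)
The plan is to prove both directions of this classical equivalence of Jockusch, using the $s$-$m$-$n$ theorem and the Recursion Theorem for $X$-relativized computations in each half.

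For the direction that effective immunity yields a DNC function, suppose $A \leq_T \mathbf{d}$ is effectively immune relative to $X$ via the $X$-computable function $g$. Using $s$-$m$-$n$ together with the Recursion Theorem, I would obtain an $X$-computable $h$ such that $W_{h(e)}^X$, whenever $\Phi_e^X(e)$ converges, enumerates the block $\Phi_e^X(e), \Phi_e^X(e)+1, \ldots, \Phi_e^X(e) + g(h(e)) - 1$. Then $|W_{h(e)}^X| = g(h(e))$ in that case, so effective immunity forces some element $v$ of this block to lie outside $A$; the $A$-oracle lets us locate the least such $v$, and setting $f(e) = v+1$ gives $f(e) > v \geq \Phi_e^X(e)$. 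To make $f$ total for $e$ with $\Phi_e^X(e)$ divergent, I would dovetail the block-search with a default output; since the DNC condition imposes no requirement at divergent $e$, the default is harmless.

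For the converse direction, suppose $f \leq_T \mathbf{d}$ is total and DNC relative to $X$, and I would build $A \leq_T \mathbf{d}$ effectively immune. Using $s$-$m$-$n$ and the Recursion Theorem, for each $X$-c.e.\ index $e$ pick $j(e)$ so that $\Phi_{j(e)}^X(j(e))$ equals the $j(e)$-th element enumerated into $W_e^X$, if that many ever appear. Since $f$ is DNC, $f(j(e))$ differs from this element whenever it converges, yielding a $\mathbf{d}$-computable value attached to each $e$ whose role is to distinguish a specific element of $W_e^X$ that we can force out of $A$ once $|W_e^X| \geq j(e)$. I would then construct $A$ stage by stage, at stage $n$ adding the least fresh element larger than all prior ones that avoids the finitely many distinguished values activated so far by $e \leq n$; the result is an infinite $A$ for which $g(e) = j(e)$ witnesses effective immunity.

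The main obstacle I expect is the coordinating bookkeeping in the converse direction: the Recursion Theorem must be applied so that the index $j(e)$, the enumeration order of $W_e^X$, and the value $f(j(e))$ cohere into an argument that actually forces $W_e^X \not\subseteq A$ once $|W_e^X| \geq j(e)$, rather than merely giving a value unrelated to $W_e^X$. Once this setup is made precise, and one verifies that only finitely many constraints are active at any stage, the construction keeps $A$ infinite and the two halves combine to give the equivalence.
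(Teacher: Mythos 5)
The paper does not prove this theorem---it is cited to Jockusch~\cite{Jfpf}---so there is no in-paper proof to compare against. Evaluated on its own, your sketch has genuine gaps in both directions.

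In the forward direction (effectively immune $\Rightarrow$ DNC), the construction of $W^X_{h(e)}$ and the search for $v$ in the block are fine, but the totalization step is the crux and is not actually handled. When $\Phi^X_e(e)$ diverges, your procedure never begins the block search, and there is no $A\oplus X$-computable way to recognize divergence in order to emit a ``default.'' Dovetailing does not help: if you time out and output a default after $t(e)$ steps for some $A\oplus X$-computable $t$, you may later see $\Phi^X_e(e)$ converge and discover your output was exactly $\Phi^X_e(e)$. This is precisely the obstacle that forces the classical proof to route through fixed-point-free (FPF) functions rather than building a DNC function directly: from an effectively immune $A$ with witness $g$, the total $A\oplus X$-computable function that sends $e$ to a canonical index for the first $g(e)+1$ elements of $A$ is FPF relative to $X$ (its output always has too many elements of $A$ to equal $W^X_e$), and one then invokes the degree-equivalence of FPF and DNC. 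Your construction never touches this, and ``the default is harmless'' glosses over the fact that you cannot compute when to use it.

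In the converse direction (DNC $\Rightarrow$ computes an effectively immune set), there is a conflation at the heart of the plan. You arrange $\Phi^X_{j(e)}(j(e))$ to equal the $j(e)$-th element of $W^X_e$, so DNC-ness gives $f(j(e))\neq$ that element. But $f(j(e))$ is merely a number known to differ from one particular element of $W^X_e$; it is not itself an element of $W^X_e$, and nothing says it avoids $W^X_e$ entirely. So when you ``add the least fresh element that avoids the distinguished values,'' keeping $f(j(e))$ out of $A$ does nothing to force $W^X_e\not\subseteq A$. If instead the ``distinguished value'' is meant to be the $j(e)$-th element of $W^X_e$ itself, then $f$ is not used at all, and moreover that element can appear late in the $X$-enumeration, after $A$ has already swallowed it---so the stage-by-stage avoidance fails. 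You flag the ``coordinating bookkeeping'' as the main obstacle, and indeed it is: making the DNC function produce something that provably escapes a large $W^X_e$ (rather than merely dodging one designated element) requires a different combinatorial setup than the one sketched. As written, neither half of the argument closes.
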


The proof of the following theorem shows how to obtain a hard
computable instance of thin-HT$^{=2}$ from Lemma \ref{mainlem}.

\begin{thm}
\label{dncthm}
There is a computable instance of \textup{thin-HT}$^{=2}$ such that
any solution is effectively immune relative to $\emptyset'$, and hence
has DNC degree relative to $\emptyset'$.
\end{thm}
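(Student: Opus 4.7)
The plan is to obtain the hard instance by a single application of Lemma \ref{mainlem}. Recall from the text that $\mathbb{N}$ is $g_0$-large for $g_0(e,k) = k$. Apply Lemma \ref{mainlem} to $(g_0, \mathbb{N})$: since the lemma's construction is uniform and the inputs are computable, we obtain a computable function $\widehat{g_0}$ and a computable splitting $\mathbb{N} = D^0 \sqcup D^1$ in which both $D^0$ and $D^1$ are $\widehat{g_0}$-large. Define the computable coloring $c : \mathbb{N} \to \{0,1\}$ by $c(n) = i$ if $n \in D^i$; this is the proposed hard instance.

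A solution $S$ to \textup{thin-HT}$^{=2}$ for $c$ is an infinite set with $\{a+b : a \neq b \in S\}$ thin for $c$; since $c$ uses only two colors, this means every pair sum of $S$ lies in some single $D^i$. The claim is that $S$ is effectively immune relative to $\emptyset'$ with the computable bound $f(e) := \widehat{g_0}(e, 1)$. Suppose to the contrary that $W^{\emptyset'}_e \subseteq S$ and $|W^{\emptyset'}_e| \geq f(e)$. Then $E^{f(e)}_e$ is defined and is a subset of $W^{\emptyset'}_e \subseteq S$. For any $s \in S$ with $s > \max E^{f(e)}_e$, every element of $s + E^{f(e)}_e$ is a pair sum of $S$ and so lies in $D^i$; equivalently, $(s + E^{f(e)}_e) \cap D^{1-i} = \emptyset$. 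However, the $\widehat{g_0}$-largeness of $D^{1-i}$, instantiated with $k = 1$, yields $|D^{1-i} \cap (s + E^{\widehat{g_0}(e,1)}_e)| \geq 1$ for all sufficiently large $s$. Choosing $s \in S$ large enough to satisfy both constraints (possible since $S$ is infinite) produces the contradiction.

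The main conceptual step is recognizing that the shift structure in the definition of $f$-largeness captures the pair-sum operation: for $s \in S$ and $T \subseteq S$ with $s > \max T$, the shift $s + T$ lies inside the pair-sum set of $S$. Once this match is made, the choice $k=1$ together with the opposition between $D^i$ and $D^{1-i}$ makes the argument immediate, and the DNC consequence follows from Theorem \ref{Jthm}. There is no serious technical obstacle beyond checking that Lemma \ref{mainlem} applied to computable inputs yields computable outputs, which the uniformity clause guarantees.
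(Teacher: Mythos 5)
There is a fatal gap in your construction: a $2$-coloring is a \emph{trivial} instance of thin-HT$^{=2}$, so the instance you build has no hardness at all. Recall the paper's definition of ``thin'': $T$ is thin for $c$ if there is \emph{some} $i \in \mathbb N$ with $c(x) \neq i$ for all $x \in T$. Nothing requires $i$ to lie in the range of $c$. So if $c : \mathbb N \rightarrow \{0,1\}$, then for \emph{any} infinite $S$ the set of pair sums avoids the color $2$ vacuously, and $S$ is a solution. In particular, every infinite computable set is a solution to your instance. Your argument correctly shows that any $S$ whose pair sums avoid one of the colors $0$ or $1$ (i.e., a solution in the HT$^{=2}_2$ sense) is effectively immune relative to $\emptyset'$ --- but that establishes a hard instance of HT$^{=2}_2$, which is already due to \cite{CDHJSW}, not a hard instance of thin-HT$^{=2}$. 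For the same reason, no coloring with bounded range can witness the theorem: the hard instance must be a coloring with unbounded range so that ``avoiding some color'' is a real constraint on $S$.

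This is exactly why the paper iterates Lemma \ref{mainlem} infinitely often. It builds a descending chain $\mathbb N = D_0 \supseteq D_1 \supseteq \cdots$ with associated functions $f_n$, sets $c(x)$ to be the largest $n \leq x$ with $x \in D_n$ (a coloring that is surjective onto $\mathbb N$), and arranges that $c^{-1}(n)$ is $f_n$-large for \emph{every} $n$. Then for whichever $n$ the solution $S$ happens to avoid, one runs precisely the argument you sketched with $D^{1-i}$ replaced by $c^{-1}(n)$ and $\widehat{g_0}$ by $f_n$, obtaining the computable bound $e \mapsto f_n(e,1)$ for that fixed $n$. Your core idea --- that the shift $s + E^m_e$ sits inside the pair-sum set when $E^m_e \subseteq S$ and $s \in S$ is large, and this collides with $\widehat{f}$-largeness at $k=1$ --- is the right engine and matches the paper; what is missing is the realization that one application of the lemma cannot produce an instance for which ``thin'' imposes any constraint, so the lemma must be applied once per color.
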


\begin{proof}
Let $D_0=\mathbb N$ and $f_0(e,k)=k$. Given $D_n$ and $f_n$, let
$\widehat{f_n}$ and $D_n^i$ be as in Lemma \ref{mainlem}, let
$f_{n+1}=\widehat{f_n}$, and let $D_{n+1}=D_n^1$. Note that the $D_n$
are uniformly computable. Let $c(x)$ be the largest $n \leq x$ such
that $x \in D_n$. Then $c$ is a computable coloring of $\mathbb N$.
If $c(x)=n$ and $x>n$ then $x \in D_n$ but $x \notin D_m$ for $m>n$,
so $x \in D_n^0$. Thus for each $n$, we have that the difference
between $c^{-1}(n)$ and $D^0_n$ is finite, and hence $c^{-1}(n)$ is
$f_n$-large.

Let $S$ be a solution to $c$ as an instance of thin-HT$^{=2}$, and let
$n$ be such that $c(x+y) \neq n$ for all distinct $x,y \in S$. For any
$e$, if $|W_e^{\emptyset'}| \geq f_n(e,1)$ then $E_e^{f_n(e,1)}
\subseteq W_e^{\emptyset'}$ is defined, and hence $c^{-1}(n) \cap
(s+E_e^{f_n(e,1)}) \neq \emptyset$ for all sufficiently large $s$. In
other words, if $s$ is sufficiently large then there is an $x \in
E_e^{f_n(e,1)}$ such that $c(x+s)=n$. It follows that $E_e^{f_n(e,1)}
\nsubseteq S$, and hence $W_e^{\emptyset'} \nsubseteq S$, since
$E_e^{f_n(e,1)} \subseteq W_e^{\emptyset'}$. Thus we conclude that if
$W_e^{\emptyset'} \subseteq S$ then $|W_e^{\emptyset'}| <
f_n(e,1)$. Since $f_n(e,1)$ is computable as a function of $e$, it
follows that $S$ is effectively immune relative to $\emptyset'$, and
hence has DNC degree relative to $\emptyset'$.
\end{proof}

No infinite $\Sigma^0_2$ set can be effectively immune relative to
$\emptyset'$, so we have the following.

\begin{cor}
There is a computable instance of \textup{thin-HT}$^{=2}$ with no
$\Sigma^0_2$ solution.
\end{cor}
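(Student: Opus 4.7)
The plan is simply to combine Theorem~\ref{dncthm} with the fact asserted in the sentence immediately preceding the corollary: no infinite $\Sigma^0_2$ set is effectively immune relative to $\emptyset'$. By Theorem~\ref{dncthm}, there is a computable coloring $c$ witnessing an instance of thin-HT$^{=2}$ whose every solution is effectively immune relative to $\emptyset'$. Any solution to a thin-HT$^{=2}$ instance is, by definition, infinite; hence a $\Sigma^0_2$ solution would be an infinite $\Sigma^0_2$ set that is effectively immune relative to $\emptyset'$. The whole corollary therefore reduces to verifying that interposed claim.

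To justify that no infinite $\Sigma^0_2$ set $A$ is effectively immune relative to $\emptyset'$, I would argue as follows. A $\Sigma^0_2$ set is c.e.\ relative to $\emptyset'$, so $A = W^{\emptyset'}_{e_0}$ for some fixed $e_0$. Since $A$ is infinite, we may, by $s$-$m$-$n$ relative to $\emptyset'$, obtain a total $\emptyset'$-computable function $g$ such that for every $n$ the set $W^{\emptyset'}_{g(n)}$ consists of the first $n$ many elements of $A$ to appear in a fixed enumeration; in particular $W^{\emptyset'}_{g(n)} \subseteq A$ and $|W^{\emptyset'}_{g(n)}| = n$. Suppose toward a contradiction that an $\emptyset'$-computable function $f$ witnesses the effective immunity of $A$. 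Apply the recursion theorem relative to $\emptyset'$ to the $\emptyset'$-computable function $n \mapsto g(f(n))$ to obtain an index $e$ with $W^{\emptyset'}_e = W^{\emptyset'}_{g(f(e))}$. Then $W^{\emptyset'}_e \subseteq A$ but $|W^{\emptyset'}_e| = f(e)$, contradicting the definition of effective immunity.

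I do not anticipate any real obstacle: Theorem~\ref{dncthm} has already done all the combinatorial and priority-style work, and the residual fact about infinite $\Sigma^0_2$ sets is a standard relativization of the classical observation that no infinite c.e.\ set is effectively immune. The only points requiring a moment's care are the existence of the uniform $g$ (which is routine once one fixes an enumeration of $W^{\emptyset'}_{e_0}$) and the appeal to the $\emptyset'$-relativized recursion theorem.
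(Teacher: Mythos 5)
Your proposal is correct and follows the paper's own (one-sentence) reasoning exactly: combine Theorem~\ref{dncthm} with the observation that no infinite $\Sigma^0_2$ set is effectively immune relative to $\emptyset'$. The paper treats that observation as standard and does not spell it out; your filled-in argument via $s$-$m$-$n$ and the $\emptyset'$-relativized recursion theorem is the usual relativization of the classical fact and is sound.
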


It follows that thin-HT is not provable in WKL$_0$, since the latter
has $\omega$-models consisting entirely of $\Delta^0_2$ sets. It was
noted in \cite{CDHJSW} that HT$^{=2}_2$ does not imply WKL$_0$, and
hence neither does thin-HT$^{=2}$. Thus thin-HT$^{=2}$ and WKL$_0$ are
incomparable over RCA$_0$. In fact, as mentioned in the introduction,
Patey~\cite{P2} showed that TS does not imply WKL$_0$, or even
WWKL$_0$, and we can easily adapt the proof of Theorem \ref{dncthm} to
thin-HT$^{=n}$ for any $n>2$, so we have the following.

\begin{cor}
For each $n>1$, both \textup{thin-HT}$^{=n}$ and
\textup{thin-HT}$^{\leq n}$ are incomparable with \textup{(W)WKL}$_0$
over \textup{RCA}$_0$.
\end{cor}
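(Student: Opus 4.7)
The plan is to combine two model-theoretic facts, one for each direction of incomparability. For the direction that neither principle implies WWKL$_0$, I would observe that thin-HT$^{=n}$ follows from TS$^n$ (apply TS$^n$ to the coloring $d(\{a_0,\ldots,a_{n-1}\}) = c(a_0+\cdots+a_{n-1})$), and that thin-HT$^{\leq n}$ follows from TS$^n$ by the lemma in Section~1. Patey~\cite{P2} produced an $\omega$-model of TS that fails to satisfy WWKL$_0$; since this model is automatically an $\omega$-model of TS$^n$, and hence of both thin-HT$^{=n}$ and thin-HT$^{\leq n}$, it witnesses that neither principle implies WWKL$_0$ (and so neither implies WKL$_0$) over RCA$_0$.

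For the reverse direction, that WKL$_0$ does not imply either principle, the plan is to adapt the proof of Theorem~\ref{dncthm} to each $n>1$, producing a single computable coloring $c_n : \mathbb N \rightarrow \mathbb N$ that serves as a hard instance of both thin-HT$^{=n}$ and thin-HT$^{\leq n}$. Setting $D_0 = \mathbb N$ and $f_0(e,k) = k$, iterate Lemma~\ref{mainlem} to obtain a nested sequence of uniformly computable $f_m$-large sets $D_m$, and define $c_n(x)$ to be the largest $m \leq x$ with $x \in D_m$. Given a thin-HT$^{=n}$-solution $S$ to $c_n$ and a color $m$ with $c_n(x_1+\cdots+x_n) \neq m$ for all distinct $x_1,\ldots,x_n \in S$, a cofinite subset of $c_n^{-1}(m)$ is $f_m$-large. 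For any $e$ with $|W_e^{\emptyset'}| \geq f_m(e,1)$, I claim $E_e^{f_m(e,1)} \nsubseteq S$: choose distinct $s_1,\ldots,s_{n-1} \in S$ each larger than every element of $E_e^{f_m(e,1)}$ and with $s = s_1+\cdots+s_{n-1}$ large enough for $f_m$-largeness to apply to the shift $s + E_e^{f_m(e,1)}$; some $x \in E_e^{f_m(e,1)}$ then satisfies $c_n(x+s) = m$, so if $x$ were in $S$ then $x,s_1,\ldots,s_{n-1}$ would be $n$ distinct elements of $S$ whose sum has color $m$, a contradiction. Hence if $W_e^{\emptyset'} \subseteq S$ then $|W_e^{\emptyset'}| < f_m(e,1)$, so $S$ is effectively immune relative to $\emptyset'$ and, in particular, is not $\Sigma^0_2$. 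Any solution to $c_n$ as an instance of thin-HT$^{\leq n}$ is in particular a solution as an instance of thin-HT$^{=n}$, so the same $c_n$ handles both principles. The low basis theorem supplies $\omega$-models of WKL$_0$ consisting entirely of $\Delta^0_2$ sets, and no such model contains a solution to $c_n$, so WKL$_0$ implies neither thin-HT$^{=n}$ nor thin-HT$^{\leq n}$.

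The only content beyond Theorem~\ref{dncthm} is the replacement of the pair $(x,s)$ used there by an $n$-tuple $(x,s_1,\ldots,s_{n-1})$, and this is the step to be checked carefully; it goes through cleanly because $f_m$-largeness of $c_n^{-1}(m)$ concerns single shifts of the finite sets $E_e^{f_m(e,1)}$, and any infinite putative solution $S$ automatically supplies $n-1$ arbitrarily large distinct elements whose sum realizes the needed shift.
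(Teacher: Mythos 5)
Your proof is correct and follows essentially the same route as the paper: one direction comes from both principles following from TS$^n$ together with Patey's result that TS does not imply WWKL$_0$, and the other from adapting Theorem~\ref{dncthm} to produce a computable instance with no $\Sigma^0_2$ solution, which no $\omega$-model of WKL$_0$ consisting of $\Delta^0_2$ sets can solve. The only difference is that where the paper simply asserts that Theorem~\ref{dncthm} ``easily adapts'' to $n>2$, you spell out the adaptation, replacing the pair $(x,s)$ with an $n$-tuple $(x,s_1,\ldots,s_{n-1})$ whose tail sums to the required large shift; this is exactly the intended argument.
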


Arguing as in the proof of Corollary 3.6 of \cite{CDHJSW}, we have the
following.

\begin{cor}
There is a computable instance of \textup{thin-HT}$^{=2}$ such that
all solutions are hyperimmune.
\end{cor}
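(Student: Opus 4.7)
The plan is to adapt the proof of Theorem~\ref{dncthm}, enriching the largeness framework so that solutions to the resulting computable instance of thin-HT$^{=2}$ are forced to be hyperimmune. This mirrors the step used to obtain Corollary 3.6 of~\cite{CDHJSW} from the main DNC-degree result there.

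The first step is to strengthen the notion of largeness. Alongside the existing density requirement governed by the $\emptyset'$-c.e.\ sets $E_e^n$, I would impose an additional density requirement governed by the canonical finite sets arising from (partial) computable sequences of disjoint finite sets, i.e., candidate computable strong arrays. Concretely, for each index $e$ for which $\Phi_e$ codes such a sequence $(F^e_j)_j$, the enriched definition of $f$-largeness would demand $|D \cap (s + F^e_j)| \geq k$ for all sufficiently large $s$ whenever the growth parameter dictated by $f$ is attained.

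The second step is to verify an analog of Lemma~\ref{mainlem} for the enriched notion: from an enriched-$f$-large $D$ we can uniformly compute $\widehat f$ and a splitting $D = D^0 \sqcup D^1$ with each $D^i$ enriched-$\widehat f$-large. The splitting is produced by Theorem~\ref{rsthm} applied to the combined family of test sets $\{s + E_e^n\} \cup \{s + F^e_j\}$, using that each element lies in only a controlled number of the new sets at each size so that the Rumyantsev--Shen counting hypothesis is preserved.

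Iterating the enriched splitting exactly as in Theorem~\ref{dncthm} yields $D_0 = \mathbb N \supseteq D_1 \supseteq \cdots$ and the coloring $c(x) = \max\{n \leq x : x \in D_n\}$, with $c^{-1}(n)$ enriched-$f_n$-large up to finite difference. Given a solution $S$ avoiding color $n$, if $S$ were not hyperimmune then some computable strong array $(F_j)_j$ would satisfy $F_j \cap S \neq \emptyset$ for cofinitely many $j$; applying enriched-$f_n$-largeness of $c^{-1}(n)$ to shifts $s + F_j$ for large $s \in S$, a suitable choice of $j$ and $s$ using this hitting property would deliver distinct $x, s \in S$ with $c(x + s) = n$, contradicting thinness.

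The main obstacle is calibrating the enriched largeness so that the witness $x$ inside $s + F_j$ produced at the final step can actually be forced to lie in $F_j \cap S$, not merely in $F_j$. This is the crux distinguishing the hyperimmunity argument from the effective-immunity argument at the end of Theorem~\ref{dncthm}, and is where mirroring the adaptation used in \cite{CDHJSW}'s Corollary 3.6 requires the most care.
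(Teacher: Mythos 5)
The paper's own ``proof'' is just a pointer to Corollary 3.6 of Csima--Dzhafarov--Hirschfeldt--Jockusch--Solomon--Westrick, so there is no detailed argument to compare against directly. Your high-level plan---enrich the largeness notion so that the splitting lemma also controls the interaction of $c^{-1}(n)$ with computable strong arrays, then iterate as in Theorem~\ref{dncthm}---is a reasonable reading of what ``arguing as in Corollary 3.6'' should mean, and you have correctly located the crux. But the gap you flag at the end is a genuine one, and the mechanism you sketch does not close it.

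Here is why. Suppose $S$ is a solution avoiding colour $n$ and $(F_j)_j$ is a computable strong array with $F_j\cap S\neq\emptyset$ for every $j$. Your enriched largeness, applied to $c^{-1}(n)$, would give that for all large $s$ the set $c^{-1}(n)\cap(s+F_j)$ has at least $k$ elements (for $j,k$ in the appropriate range). Taking $s\in S$ large, this tells you that at least $k$ elements $x\in F_j$ satisfy $c(x+s)=n$, and hence $x\notin S$. That is exactly the same kind of conclusion the unenriched argument already yields for the sets $E_e^{f_n(e,k)}$: it bounds $|F_j\cap S|$ from above, it does not exhibit an element of $F_j\cap S$ whose sum with $s$ lands in $c^{-1}(n)$. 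To ``deliver distinct $x,s\in S$ with $c(x+s)=n$'' you would need the chosen witness $x$ to be one of the (unknown) elements of $F_j\cap S$, and nothing in the density statement selects it. The only way the density condition alone could force this is by taking $k=|F_j|$, i.e.\ demanding $s+F_j\subseteq c^{-1}(n)$. But $c^{-1}(n)$ is (up to a finite set) $D_n^0$, one side of a Rumyantsev--Shen splitting of $D_n$, and the whole point of Theorem~\ref{rsthm} is that the test sets are \emph{not} homogeneous for that splitting; requiring $s+F_j$ to land entirely on one side is the opposite of what the machinery gives you, and if you do not make $s+F_j$ a test set you have no control over where it goes at all. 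So the calibration you hope for is not merely delicate---as stated it runs directly against the tool you are using, and a different idea is needed to pass from ``many elements of $s+F_j$ get colour $n$'' to ``some element of $(F_j\cap S)+s$ gets colour $n$.'' You should not present this as a completed proof; it is an outline with the key step still missing.
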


The reverse-mathematical analog of the existence of degrees that are
DNC over the jump is the principle 2-DNC, defined e.g.\ in Section 4
of \cite{CDHJSW}. Miller [unpublished] showed that 2-DNC is
equivalent, both over RCA$_0$ and in the sense of Weihrauch
reducibility, to the following version of the Rainbow Ramsey
Theorem, which was shown by Patey \cite{P} to be strictly weaker than
TS$^2$.

\begin{defn}
RRT$^2_2$: Let $c : [\mathbb N]^2 \rightarrow \mathbb N$ be such that
$|c^{-1}(i)| \leq 2$ for all $i$. Then there is an infinite set $R$
such that $c$ is injective on $[R]^2$.
\end{defn}

As discussed in \cite{CDHJSW}, the proof of Theorem \ref{rsthm}
carries through in RCA$_0$, from which it will follow that so does the
proof of Lemma \ref{mainlem} that we will give below. Thus the proof
of Theorem \ref{dncthm} also carries through in RCA$_0$, except for
one issue: Having $|W_e^{\emptyset'}| \geq m$ does not necessarily
imply in RCA$_0$ that $E^m_e$ is defined. (The issue is that RCA$_0$
does not imply the $\Pi^0_1$-bounding principle.) However, we can get
around this problem exactly as in Section 4 of \cite{CDHJSW}, by using
the principle 2-EI defined there, thus obtaining the following.

\begin{thm}
\textup{thin-HT}$^{=2}$ implies \textup{RRT}$^2_2$ over \textup{RCA}$_0$. 
\end{thm}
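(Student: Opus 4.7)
The plan is to show that RCA$_0$ + thin-HT$^{=2}$ proves the principle 2-EI from Section 4 of \cite{CDHJSW}, and then to invoke the equivalences, over RCA$_0$, of 2-EI with 2-DNC and (by Miller) of 2-DNC with RRT$^2_2$ to conclude. The combinatorial content is already present in Lemma \ref{mainlem} and the proof of Theorem \ref{dncthm}; the work is to rewrite the immunity conclusion in a form that does not require $\Pi^0_1$-bounding.

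First I would note that, as observed just before the theorem, the proofs of Theorem \ref{rsthm} and of Lemma \ref{mainlem} both formalize in RCA$_0$. This makes the iterated construction in Theorem \ref{dncthm} available there: the sets $D_n$ and functions $f_n$ are produced uniformly computably, and the coloring $c(x) = \max\{n \leq x : x \in D_n\}$ is a legitimate instance of thin-HT$^{=2}$ inside RCA$_0$. Given any solution $S$ and any $n$ avoided as a color on sums from $[S]^2$, the aim is to extract the same computable bound $e \mapsto f_n(e,1)$ on the sizes of subsets of $S$ of the form $W^{\emptyset'}_e$ as in the proof of Theorem \ref{dncthm}.

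The main obstacle is the step where the original proof infers that $E^{f_n(e,1)}_e$ is defined whenever $|W^{\emptyset'}_e| \geq f_n(e,1)$: the definedness of $E^m_e$ is a $\Pi^0_1$ assertion, and is not generally available in RCA$_0$. To get around this, I would follow the device of Section 4 of \cite{CDHJSW} and replace classical effective immunity by the 2-EI formulation, whose statement is designed so that the bound on $|W^{\emptyset'}_e|$ can be read off from the stage-$s$ approximations $E^m_e[s]$ together with the $f_n$-largeness of $c^{-1}(n)$, without ever needing to assert the stabilization of any $E^m_e$. Once this translation is in place, RCA$_0$ + thin-HT$^{=2}$ proves 2-EI, and the equivalences above deliver RRT$^2_2$.
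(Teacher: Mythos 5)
Your proposal is correct and follows essentially the same route as the paper: the authors also observe that the proof of Theorem \ref{dncthm} formalizes in RCA$_0$ except for the $\Pi^0_1$-bounding issue with $E^m_e$, and resolve it by passing through the principle 2-EI from Section 4 of \cite{CDHJSW} and then to 2-DNC and RRT$^2_2$ via Miller's equivalence. Your write-up simply spells out a bit more of what the reference to \cite{CDHJSW} is being asked to do.
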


We can also obtain a Weihrauch reduction from RRT$^2_2$ to a
version of thin-HT$^{=2}$ as in the final paragraph of Section 4 of
\cite{CDHJSW}, but we have to be a bit careful because in the proof of
Theorem \ref{dncthm}, the function witnessing that $S$ is effectively
immune relative to $\emptyset'$ is obtained uniformly not from $S$,
but from an $n$ such that $c(x+y) \neq n$ for all distinct $x,y \in
S$. Let strong thin-HT$^{=2}$ be the version of thin-HT$^{=2}$
where a solution to an instance $c$ consists of both a solution $S$ to
$c$ as an instance of thin-HT$^{=2}$ and an $n$ as above. Then we have
the following.

\begin{thm}
\textup{RRT}$^2_2$ is Weihrauch-reducible to \textup{strong
thin-HT}$^{=2}$.
\end{thm}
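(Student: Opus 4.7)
The plan is to reduce RRT$^2_2$ to strong thin-HT$^{=2}$ by composing Miller's Weihrauch equivalence of RRT$^2_2$ with 2-DNC with a relativized, uniform version of Theorem \ref{dncthm} that establishes a Weihrauch reduction from 2-DNC to strong thin-HT$^{=2}$. Concretely, given an input $X$ to 2-DNC, I would run the construction in the proof of Theorem \ref{dncthm} using $X'$ in place of $\emptyset'$ throughout. Since Lemma \ref{mainlem} and the inductive construction of the sets $D_n$ and auxiliary functions $f_n$ are already uniform, the resulting coloring $\tilde{c}_X$ is an $X$-computable instance of thin-HT$^{=2}$ produced uniformly from $X$.

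Given a strong thin-HT$^{=2}$ solution $(S,n)$ to $\tilde{c}_X$, the relativized version of the final paragraph of the proof of Theorem \ref{dncthm} shows that $S$ is effectively immune relative to $X'$, with witness $g(e)=f_n(e,1)$ that is computable uniformly from $n$ and $X$. Applying the uniform relativization of Jockusch's Theorem \ref{Jthm} then converts $S$ and $g$ into a function that is DNC relative to $X'$, which is a valid 2-DNC output. Chaining this with the backward functional of Miller's reduction from RRT$^2_2$ to 2-DNC yields, uniformly from the original RRT$^2_2$ instance $c$, an infinite rainbow for $c$, completing the Weihrauch reduction.

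The main obstacle, and the precise reason the statement refers to the \emph{strong} rather than the plain version of thin-HT$^{=2}$, is the dependence of the immunity witness $g$ on the unused color $n$: if only $S$ were returned, there would be no Weihrauch-uniform way to name the correct function $f_n(\cdot,1)$, since the pre-image index set is not decidable from $S$ alone. Every other ingredient either comes with the requisite uniformity built in (Theorem \ref{rsthm}, Theorem \ref{Jthm}, and Miller's Weihrauch equivalence) or is easily verified to be uniform by inspection of the construction in Theorem \ref{dncthm}, so no further obstacles are expected; the task is essentially to record that each forward and backward map in the chain is a computable functional with the appropriate oracle access.
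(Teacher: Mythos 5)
Your proposal is correct and follows the same route the paper intends: relativize the construction of Theorem \ref{dncthm} uniformly, use the color $n$ returned by strong thin-HT$^{=2}$ to name the immunity witness $f_n(\cdot,1)$, pass through the uniform form of Theorem \ref{Jthm} to get a 2-DNC output, and compose with Miller's Weihrauch equivalence between RRT$^2_2$ and 2-DNC. You also correctly identify the exact reason the strong version is needed, which is the point the paper itself emphasizes.
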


We do not know, however, whether this theorem remains true if we
replace strong thin-HT$^{=2}$ by thin-HT$^{=2}$.

None of the above results depend on the addition function in
particular, and can be adapted as in \cite{CDHJSW} to any function $f:
[\mathbb N]^2 \rightarrow \mathbb N$ that is \emph{addition-like},
which means that
\begin{enumerate}

\item $f$ is computable,

\item there is a computable function $g$ such that $f(\{x,y\})>n$ for
all $y>g(x,n)$, and

\item there is a $b$ such that for all $x \neq y$, there are at most
$b$ many $z$'s for which $f(\{x,z\})=f(\{x,y\})$.

\end{enumerate}

We finish this section by proving Lemma \ref{mainlem}.

\begin{proof}[Proof of Lemma \ref{mainlem}]
Let $f$ be a binary function and $D$ an $f$-large set. We will apply
Theorem \ref{rsthm} to obtain a computable $c : \mathbb N \rightarrow
2$. We then define $D^i = \{n \in D : c(n)=i\}$. The value of $q$ will
not matter here, so let us fix $q=\frac{1}{2}$. Let $M$ be as in
Theorem \ref{rsthm}.

Let $g$ be a computable injective binary function with computable
image such that $kg(e,k) \leq 2^{\frac{g(e,k)}{2}}$ and $g(e,k) \geq
M$ for all $e$ and $k$.

Say that $s$ is \emph{acceptable for $e,k$} if $|D \cap (s +
E^{f(e,kg(e,k))}_e[s])| \geq kg(e,k)$ and for every $t<s$ such that $(s
+ E^{f(e,kg(e,k))}_e[s]) \cap (t + E^{f(e,kg(e,k))}_e[t]) \neq
\emptyset$, we have $ E^{f(e,kg(e,k))}_e[s] = E^{f(e,kg(e,k))}_e[t]$. If
$s$ is acceptable for $e,k$ then let $F_{e,k,s,0}$ be the first
$g(e,k)$ many elements of $s + E^{f(e,kg(e,k))}[s]$, let $F_{e,k,s,1}$ be
the next $g(e,k)$ many elements of $s + E^{f(e,kg(e,k))}[s]$, and so on,
until $F_{e,k,s,k-1}$.

Let $\mathcal F$ consist of all $F_{e,k,s,j}$ for all $e,k$, all $s$
acceptable for $e,k$, and all $j<k$. Then we can arrange the elements
of $\mathcal F$ into a computable sequence of finite sets, each of
size at least $M$. Fix $x$ and $m$. If $m$ is not in the image of
$g$ then there are no elements of $\mathcal F$ of size $m$. Otherwise,
there is a unique pair $e,k$ such that $m=g(e,k)$, and all elements of
$\mathcal F$ of size $m$ that contain $x$ are of the form
$F_{e,k,s,j}$ for some $s \leq x$. We can computably determine all
such sets from $m$ and $x$, and the definition of acceptability means
that there are at most $kg(e,k) \leq 2^{\frac{m}{2}}$ many such sets.

Thus the hypotheses of Theorem \ref{rsthm} hold, and hence there is a
$c$, obtained uniformly computably from $f$ and $D$, such that none of
the sets in $\mathcal F$ are homogeneous for $c$. Let
$\widehat{f}(e,k)=f(e,kg(e,k))$ and let $D^i = \{n \in D :
c(n)=i\}$. Fix $e$ and $k$ such that $E_e^{\widehat{f}(e,k)}$ is
defined. If $s$ is sufficiently large then $s$ is acceptable for
$e,k$, and $F_{e,k,s,j} \subseteq s + E_e^{\widehat{f}(e,k)}$ for all
$j<k$. For each $j<k$ and $i<2$, there is at least one $x \in
F_{e,k,s,j}$ such that $c(x)=i$. Since the $F_{e,k,s,j}$ are disjoint,
$|D^i \cap (s + E_e^{\widehat{f}(e,k)})| \geq k$. Thus $D^i$ is
$\widehat{f}$-large.
\end{proof}

\section{Open Questions}
\label{open}

In this section, we collect a few open questions and possible
directions for further work arising from the above results.

\begin{q}
Does thin-HT imply ACA$_0$ over RCA$_0$ (i.e., without assuming
I$\Sigma^0_2$)?
\end{q}

Of course, one way to give a positive answer to this question would be
to show that thin-HT implies I$\Sigma^0_2$ over RCA$_0$. If that is
not the case, then it could be interesting to try to determine the
first-order part of thin-HT.

\begin{q}
Is thin-HT provable in ACA$_0$?
\end{q}

\begin{q}
Does thin-HT imply HT, say over RCA$_0$?
\end{q}

In the spirit of Hindman, Leader, and Strauss~\cite{HLS}, we can also
ask the less formal question of whether there is a proof of thin-HT
that is not already a proof of HT.

\begin{q}
Is RRT$^2_2$ Weihrauch-reducible to thin-HT$^{=2}$ (as opposed to
strong thin-HT$^{=2}$)?
\end{q}

\begin{q}
What is the exact relationship between thin-HT$^{=2}$ and each of
TS$^2$, RRT$^2_2$, and HT$^{=2}$?
\end{q}

There are also versions of the Thin Set Theorem for colorings with
finitely many colors. For example, an instance of TS$^n_k$ is a
coloring $c$ of $[\mathbb N]^n$ with $k$ many colors, and a solution
to this instance is an infinite set $T$ such that $|c([T]^n)|<k$. This
principle and RT$^n_k$ form the two ends of a spectrum of principles
RT$^n_{k,j}$ for $1 \leq j < k$, where an instance is a coloring $c$
of $[\mathbb N]^n$ with $k$ many colors, and a solution to this
instance is an infinite set $T$ such that $|c([T]^n)| \leq j$. It
would be interesting to pursue versions of HT based on these
principles. One might hope to show, for instance, that there is a
boundary between principles that ``behave like HT'', e.g.\ HT$^{\leq
  2}_4$, which as mentioned in the introduction was shown to imply
ACA$_0$ in \cite{CKLZ}; and those that ``behave like versions of TS /
RT'', e.g.\ the thin version of HT$^{\leq 2}_4$, which can easily be
shown to follow from RT$^2_{4,2}$.

\end{document}